\documentclass[a4paper,12pt,reqno]{amsart}

 \usepackage[T2A]{fontenc}
 \usepackage[utf8]{inputenc}
 \usepackage[ukrainian,english]{babel}
 \usepackage{amssymb,upref}
\usepackage{amsthm}

\usepackage{graphicx}
\usepackage{mathrsfs}

\usepackage[text={130mm,200mm}]{geometry}

\theoremstyle{plain}
\newtheorem{theorem}{Theorem}
\newtheorem*{theorem*}{Theorem}
\newtheorem{corollary}{Corollary}
\newtheorem*{corollary*}{Corollary}
\newtheorem{lemma}{Lemma}
\newtheorem*{lemma*}{Lemma}

\newtheorem*{proposition*}{Proposition}

\newtheorem*{conjecture*}{Conjecture}
\theoremstyle{definition}

\newtheorem*{definition*}{Definition}
\theoremstyle{remark}

\newtheorem*{remark*}{Remark}

\begin{document}

\title[On one class of nowhere non-monotonic functions]{On one class of nowhere non-monotonic functions with fractal properties that contains a subclass of singular functions}

\author{S. O. Klymchuk}
\address[S. O. Klymchuk]{Institute of Mathematics of NAS of Ukraine, Kyiv, Ukraine\\
ORCID 0009-0005-3979-4543}
\email{svetaklymchuk@imath.kiev.ua}

\author{M. V. Pratsiovytyi}
\address[M. V. Pratsiovytyi]{Institute of Mathematics of NAS of Ukraine,  Dragomanov Ukrainian State University, Kyiv, Ukraine\\
ORCID 0000-0001-6130-9413}
\email{prats4444@gmail.com}

\subjclass{11K55, 26A27, 26A30}

\keywords{continuous non monotonic function, singular function, level set of function, differentiability of function, $Q^*_3$-representation of real number, geometry of $Q^*_3$-representation, cylinders, sets of Cantor type.}

\thanks{Collection of Proceedings of the Institute of Mathematics of the NAS of Ukraine. -- Kyiv: Institute of Mathematics of the NAS of Ukraine, 2017, p. 23--37. }

\begin{abstract}
We study one class of continuous functions $f$ defined on segment $[0,1]$ by equality
$$
f(x)=\delta_{\alpha_1(x)1}+\sum^{\infty}_{k=2}\left[\delta_{\alpha_k(x)k}\prod^{k-1}_{j=1}g_{\alpha_j (x)j}\right]\equiv\Delta^{G^*_3}_{\alpha_1\alpha_2\ldots\alpha_k\ldots},
$$
where $||q^*_{ik}||$ is given infinite stochastic positive matrix ($i=0,1,2$; $k \in N$); $\beta_{0k}=0$, $\beta_{1k}=q_{0k}$, $\beta_{2k}=q_{0k}+q_{1k}$;
$(\varepsilon_k)$ is given sequence of numbers such that $0\leqslant \varepsilon_k \leqslant 1 $; $g_{0k}=\dfrac{1+\varepsilon_k}{3}=g_{2k}$, $g_ {1k}=\dfrac{1-2\varepsilon_k}{3}$, $\delta_{0k}=0$, $\delta_{1k}=g_{0k}$, $\delta_{2k}=g_{0k}+g_{1k}$, $k\in N$.

We found criteria of strict monotonicity, non monotonicity and nowhere monotonicity, non-differentiability and singularity of the functions. We pay attention to properties of level sets of the functions.
\end{abstract}

\maketitle
\section{Introduction}
Let $A_3=\{0,1,2\}$ denote the alphabet of the ternary ($3$--adic) numeral system, $L_3=A_3\times A_3\times ...$ denote the space of sequences over this alphabet and $Q_3^*=|q_{ik}|$ denote an infinite positive stochastic matrix ($i=0,1,2$; $k=1,2,...$), that is,
$$
Q^*_3=\left(
              \begin{array}{ccccc}
                q_{01} & q_{02} & \ldots & q_{0k} & \ldots \\
                q_{11} & q_{12} & \ldots & q_{1k} & \ldots \\
                q_{21} & q_{22} & \ldots & q_{2k} & \ldots \\
              \end{array}
      \right),
$$
where $q_{ik}>0$, $q_{0k}+q_{1k}+q_{2k}=1$. The matrix defines the $Q^*_3$-representation of numbers
$$
[0;1]\ni x=\beta_{\alpha_1(x)1}+\sum^{\infty}_{k=2}\left[\beta_{\alpha_k(x)k}\prod^{k-1}_{j=1}q_{\alpha_j(x)j}\right]\equiv
\Delta^{Q^*_3}_{\alpha_1\alpha_2...\alpha_k...},
$$
where $\beta_{0k}=0$, $\beta_{1k}=q_{0k}=\beta_{0k}+q_{0k}$ and $\beta_{2k}=q_{0k}+q_{1k}=\beta_{1k}+q_{1k}$.

Note that when $q_i\equiv q_{i1}=q_{i2}=...=q_{ik}=...$ where $i\in A_3$ and $k\in N$ we obtain a $Q_3$-representation determined by a triple of positive numbers  $q_0$, $q_1$, $q_2$. In the case $q_0=q_1=q_2=\dfrac{1}{3}$ this representation coincides with the classical ternary representation.

Let a sequence of real numbers $(\varepsilon_k)$ also be given such that $0\leqslant\varepsilon_k\leqslant 1$; let $g_{0k}=\dfrac{1+\varepsilon_k}{3}=g_{2k}$ and $g_{1k}=\dfrac{1-2\varepsilon_k}{3}$, $k\in N$.

The object of our study is the function $f$ defined by the equality
$$
f(x)=\delta_{\alpha_1(x)1}+\sum^{\infty}_{k=2}\left[\delta_{\alpha_k(x)k}\prod^{k-1}_{j=1}g_{\alpha_j(x)j}\right] \equiv \Delta^{G^*_3}_{\alpha_1\alpha_2\ldots\alpha_k\ldots},\eqno(1)
$$
where $\delta_{0k}=0$, $\delta_{1k}=g_{0k}=\delta_{0k}+g_{0k}$ and $\delta_{2k}=g_{0k}+g_{1k}=\delta_{1k}+g_{1k}$.

Nowhere monotone functions with fractal properties have appeared in a number of papers. In   paper \cite{Pr1} authors examined non-monotone continuous on the interval $[0;1]$ functions whose derivative equals zero almost everywhere (in the sense of the Lebesgue measure). The authors defined these functions in terms of the $s$--adic representation and its generalization --- the $Q_s$--representation of real numbers and investigated properties of their graphs. Authors paid considerable attention to functions whose arguments admit ternary and quaternary representations. In paper \cite{PrSvyn} authors introduced a class of continuous functions with complicated local structure defined in terms of the $Q^*_s$--representation, whose properties depend on a prescribed finite set of real numbers. Papers \cite{Okamoto1, Okamoto2} studied a family of continuous strictly increasing singular functions depending on a parameter $0<a<1$ and defined via the ternary representation of the argument. The authors showed that, depending on the value of the parameter the function becomes nowhere differentiable, differentiable almost nowhere (in the sense of the Lebesgue measure), singular, etc.

\section{Correctness of the definition of the function}

To justify the correctness of the definition of the function $f$ we prove that series (1) converges for any sequence $(\alpha_n)\in L_3$ and that expression (1) yields the same value for different representations of the same $Q_3^*$--rational number.

Since for a series of modules the relations
$$|\delta_{\alpha_1(x)1}|+\sum^{\infty}_{k=2}|\delta_{\alpha_k(x)k}\prod^{k-1}_{j=1}g_{\alpha_j(x)j}|\leqslant
  \frac{2}{3}+\sum^{\infty}_{k=2}\left(\frac{2}{3}\right)^k=2$$
hold, then the series (1) is absolutely convergent.

Let $x$ be an arbitrary $Q_3^*$--rational point that has the following two representations:
$\Delta^{Q^*_3}_{\alpha_1\alpha_2\ldots\alpha_n(0)}$ and $\Delta^{Q^*_3}_{\alpha_1\alpha_2\ldots[\alpha_n-1](2)}$ with $\alpha_n\neq0$. We consider the difference

\begin{align*}
\rho\equiv & f(\Delta^{Q^*_3}_{\alpha_1\alpha_2\ldots[\alpha_n-1](2)})-
       f(\Delta^{Q^*_3}_{\alpha_1\alpha_2\ldots\alpha_n(0)})=  \\
   = & \sum^{\infty}_{k=n}\left[\delta_{\alpha_k(x_2)k}\prod^{k-1}_{j=1}g_{\alpha_j(x_2)j}\right] -
 \sum^{\infty}_{k=n}\left[\delta_{\alpha_k(x_1)k}\prod^{k-1}_{j=1}g_{\alpha_j(x_1)j}\right] \\
   =\!&\!\prod^{n-1}_{j=1}\negmedspace q_{\alpha_j\!j}\!
\left[\!\delta_{[\alpha_n-1]\!n}\!+\!
      \delta_{2[n+1]}\!g_{\alpha_n(x_2)\!n}\!+\!
      \delta_{2[n+2]}\!g_{\alpha_n(x_2)\!n}\!g_{\alpha_{n+1}\!(x_2)\![n+1]}\!+\!...\right.\\
   -&
\left.
      \delta_{0[n+1]} g_{\alpha_n(x_1)n}-
      \delta_{0[n+2]} g_{\alpha_n(x_1)n} g_{\alpha_{n+1}(x_1)[n+1]}-...\right]
       \end{align*}
 \begin{align*}
  {~} =\!&\!\prod^{n-1}_{j=1}\negmedspace q_{\alpha_j\!j}
       \!\left[\!\delta_{[\!\alpha_n-1]\!n}\!-\!\delta_{\alpha_n\!n}\!+\!g_{\alpha_n\!(x_2)\!n}
       \!\left(\!\delta_{2\![n+1]}\!+\!\delta_{2\![n+2]}\!g_{\alpha_{n+1}\!(x_2)\![n+1]}\!+\!...\right)\right]\\
   =\!&\!\prod^{n-1}_{j=1}\negmedspace q_{\alpha_j\!j}\!
       \left[\!\delta_{[\!\alpha_n-1\!]\!n}\!-\!\delta_{\alpha_n\!n}\!+\!g_{\alpha_n\!(x_2)\!n}\!
       \left(\!\delta_{2\![n+1]}\!+\!\sum^{\infty}_{k=2}\!\delta_{2\![n+k]}\!\prod^{k-1}_{j=1}\negmedspace g_{2[\!n+j\!]}\right)\!\right].
\end{align*}

Since $\delta_{2(n+1)}+\sum\limits^{\infty}_{k=2}\left[\delta_{2(n+k)}\prod\limits^{k-1}_{j=1}g_{2(n+j)}\right] =\Delta^{G^*_3}_{(2)}=1$, then

$$
  \rho=\prod^{n-1}_{j=1}g_{\alpha_jj}
\left[\delta_{[\alpha_n-1]n}-\delta_{\alpha_nn}+g_{[\alpha_n-1]n}
    \right].$$
If $\alpha_1=1$, then
$$\rho=
\prod\limits^{n-1}_{j=1}g_{\alpha_jj}\left[\delta_{0n}-\delta_{1n}+g_{0n}\right]=
\prod\limits^{n-1}_{j=1}g_{\alpha_jj}\left[0-g_{0n}+g_{0n}\right]=0.$$
If $\alpha_1=2$, then
$$\rho=
\prod\limits^{n-1}_{j=1}g_{\alpha_jj}\left[\delta_{1n}-\delta_{2n}+g_{1n}\right]=
\prod\limits^{n-1}_{j=1}g_{\alpha_jj}\left[g_{0n}-g_{0n}-g_{1n}+g_{1n}\right]=0.$$

Taking these facts into account, we conclude that the function $f$ is defined correctly.

\section{The set of values of the function}

From the definition of the function we obtain
$$f(0)=f\left(\Delta^{Q^*_3}_{(0)}\right)=
\delta_{01}+\sum\limits^{\infty}_{k=2}\left(\delta_{0k}\prod\limits^{k-1}_{j}g_{0j}\right)\!=\!
0+\sum\limits^{\infty}_{k=2}\left(0\prod\limits^{k-1}_{j}g_{0j}\right)\!=\!0;$$

$$f(\!1\!)\!=\!f\!\left(\!\Delta^{Q^*_3}_{(2)}\!\right)\!=\!
\delta_{21}\!+\!\sum\limits^{\infty}_{k=2}\!\left(\!\delta_{2k}\!\prod\limits^{k-1}_{j}g_{2j}\!\right)\!=\!
\frac{2\!-\!\varepsilon_1}{3}\!+\!\sum\limits^{\infty}_{k=2}\!\left(\!\frac{2\!-\!\varepsilon_k}{3}\!\prod^{k-1}_{j=1}\!\frac{1\!+\!\varepsilon_j}{3}\!\right)\!.$$

We estimate the expression $\dfrac{2-\varepsilon_1}{3}+\sum\limits^{\infty}_{k=2}\left(\dfrac{2-\varepsilon_k}{3}\prod\limits^{k-1}_{j=1}\frac{1+\varepsilon_j}{3}\right)=A$. Since $0\leqslant\varepsilon_n \leqslant 1$ then
$$
A\geqslant\frac{2}{3}+\sum^{\infty}_{k=2}\frac{2}{3}\prod^{k-1}_{j=1}\frac{1}{3}=
\frac{2}{3}+\sum^{\infty}_{k=2}\frac{2}{3}\cdot\frac{1}{3^{k-1}}=
\frac{2}{3}+\sum^{\infty}_{k=2}\frac{2}{3^k}=
\frac{2}{3}+\frac{2}{3^2}+...=1;
$$
$$
A\leqslant\frac{1}{3}+\sum^{\infty}_{k=2}\!\frac{1}{3}\prod^{k-1}_{j=1}\frac{2}{3}\!=\!
\frac{1}{3}+\!\sum^{\infty}_{k=2}\!\frac{1}{3}\left(\frac{2}{3}\right)^{k-1}\!=
\frac{1}{3}+\frac{1}{3}\left(\frac{2}{3}+\frac{2^2}{3^2}+...\right)=1.
$$
Therefore, $f(1)=1$.

\begin{lemma}
The function $f$, defined by equality (1), takes values in the interval $[0;1]$.
\end{lemma}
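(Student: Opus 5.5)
Because $\varepsilon_k\in[0,1]$, every term of series (1) is non-negative. Indeed, $\delta_{0k}=0$, $\delta_{1k}=g_{0k}=\frac{1+\varepsilon_k}{3}>0$ and $\delta_{2k}=\frac{2-\varepsilon_k}{3}\geqslant 0$, while $g_{0k}=g_{2k}>0$ and $g_{1k}=\frac{1-2\varepsilon_k}{3}$. The factor $g_{1k}$ is negative when $\varepsilon_k>\frac12$, so a termwise sign argument fails and the function is genuinely non-monotone. The lower bound therefore cannot be read off the terms; I would instead use the self-similar (cylinder) structure of the representation.

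Write $f_m(x)=\Delta^{G^*_3}$ computed with the shifted sequence $(\varepsilon_{m+k})_k$. Then (1) gives the recursion
$$
f(x)=\delta_{\alpha_1 1}+g_{\alpha_1 1}\,f_1(\sigma x),
$$
where $\sigma$ is the shift on digits. The key identity is $\delta_{i1}+g_{i1}\in\{g_{01},\,g_{01}+g_{11},\,1\}$. Define $\underline{m}_n=\inf f_n$ and $\overline{m}_n=\sup f_n$ over all digit sequences. The claim is that, uniformly in the shift, $0\leqslant f_n\leqslant 1$.

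The cleanest route is via truncations. Let $S_N$ be the partial sum of (1) up to $k=N$, with tail replaced by $\prod_{j\leqslant N} g_{\alpha_j j}\cdot t$ for $t\in[0,1]$. I would prove by backward induction on $N$ that for any $t\in[0,1]$, the value $\delta_{i k}+g_{ik}t$ lies in $[0,1]$. This holds because it is an affine map sending $0\mapsto\delta_{ik}\in[0,1]$ and $1\mapsto\delta_{ik}+g_{ik}\in\{g_{0k},\,1-g_{0k},\,1\}\subset[0,1]$. An affine map sends $[0,1]$ into the hull of these two endpoint values, so each map $\varphi_{ik}(t)=\delta_{ik}+g_{ik}t$ maps $[0,1]$ into itself, whatever the sign of $g_{ik}$.

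Hence the composition $\varphi_{\alpha_1 1}\circ\cdots\circ\varphi_{\alpha_N N}$ maps $[0,1]$ into $[0,1]$. Take $t=0$: the composition then equals the $N$-th partial sum of (1), so every partial sum lies in $[0,1]$. Letting $N\to\infty$ and using the absolute convergence shown in Section 2, $f(x)\in[0,1]$. Independence of the representation was also established there, so the bound holds for every $x\in[0,1]$. Together with $f(0)=0$ and $f(1)=1$, this gives the lemma. The only delicate point is the negativity of $g_{1k}$. I expect it to be handled entirely by the endpoint check $\delta_{1k}+g_{1k}=1-g_{0k}\in[0,1]$, which is the step I would verify most carefully.
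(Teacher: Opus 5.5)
Your proof is correct and is essentially the paper's argument. The paper also bounds the partial sums by induction through the first-digit recursion $S_{n+1}=\delta_{\alpha_1 1}+g_{\alpha_1 1}S'_n$, with $S'_n$ the partial sum of the shifted sequence, and then passes to the limit; this is exactly your composition $\varphi_{\alpha_1 1}\circ\cdots\circ\varphi_{\alpha_N N}(0)$ peeled from the outside.

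Your endpoint check that each affine $\varphi_{ik}$ maps $[0,1]$ into itself handles $g_{1k}<0$ more cleanly than the paper's case bounds. Its stated bound $S_{n+1}<g_{01}+g_{11}$ for $\alpha_1=1$ fails when $g_{11}<0$, although $0\leqslant S_{n+1}<1$ still holds. Do delete your opening claim that every term of (1) is non-negative: it is false, and your argument never uses it.
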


\begin{proof}
Let us consider the partial sum
$$
S_m\equiv\delta_{\alpha_11}+\sum^{m}_{k=2}\left[\delta_{\alpha_k(x)k}\prod^{k-1}_{j=1}g_{\alpha_j(x)j}\right], $$
and prove that $0\leqslant S_m<1$ with $S_m=0$ if and only if $\alpha_1=\alpha_2=\ldots=\alpha_m=0$.
To do this, we use the method of mathematical induction.

For $m=1$ we have $S_1=\delta_{\alpha_11}$. If $\alpha_1=0$ then $S_1=0$, if $\alpha_1\neq0$ then $0\leqslant S_1<1$ by the definition $\delta_{i1}$ for $i=1,2$.

For $m=2$ we have $S_2=\delta_{\alpha_11}+\delta_{\alpha_22}g_{\alpha_11}$. Since $\alpha_n\in\{0,1,2\}$, then depending on the values of the digits $\alpha_1,\alpha_2$ we obtain the following values for the partial sum $S_2$ in each case:
\[
\begin{array}{cc}
  \alpha_1=0: & \alpha_1=1: \\
  \begin{cases}
0 \text{ for } \alpha_2=0,\\
g_{02}g_{01} \text{ for } \alpha_2=1,\\
(g_{02}+g_{12})g_{01},  \alpha_2=2.
\end{cases} &
\begin{cases}
g_{01} \text{ for } \alpha_2=0,\\
g_{01}+g_{02}g_{11} \text{ for } \alpha_2=1,\\
g_{01}+(g_{02}+g_{12})g_{11},  \alpha_2=2.
\end{cases}
\end{array}\]

\[\alpha_1=2: \\
\begin{cases}
g_{01}+g_{11} \text{ for } \alpha_2=0,\\
g_{01}+g_{11}+g_{02}g_{21} \text{ for } \alpha_2=1,\\
g_{01}+g_{11}+(g_{02}+g_{12})g_{21} \text{ for } \alpha_2=2.
\end{cases}
\]

In each of these cases the number $S_2$ is nonnegative (and it equals zero only in the case $\alpha_1=\alpha_2=0$) and less than $1$. Hence, 
$$0\leqslant S_2=\delta_{\alpha_11}+\delta_{\alpha_22}g_{\alpha_11}<1.$$

Assume that the statement holds for $m=n$, that is, $0\leqslant S_n<1$, and prove that this implies the validity of the statement for $m=n+1$.

Consider the partial sum
\begin{align*} S_{n+1}&=\delta_{\alpha_11}+\sum^{n+1}_{k=2}\left[\delta_{\alpha_k(x)k}\prod^{k-1}_{j=1}g_{\alpha_j(x)j}\right]\\
&=\delta_{\alpha_11}+g_{\alpha_11}\left(\delta_{\alpha_22}+\sum^{n+1}_{k=3}\left[\delta_{\alpha_k(x)k}\prod^{k-1}_{j=1}g_{\alpha_j(x)j}\right]\right)\\ &=\delta_{\alpha_11}+g_{\alpha_11}S'_n,
\end{align*}
where  $S'_n=\delta_{\alpha_22}+\sum\limits^{n+1}_{k=3} \left[\delta_{\alpha_k(x)k}\prod\limits^{k-1}_{j=1}g_{\alpha_j(x)j}\right]$ denotes the partial sum corresponding to the set $(\alpha_2,\alpha_3,\ldots,\alpha_{n+1})$.

By the assumption, $0\leqslant S'_n<1$ and $S'_n=0$ if and only if $\alpha_2=\alpha_3=\ldots=\alpha_{n+1}=0$. If $S'_n=0$, then $1> S_{n+1}=\delta_{\alpha_11}\geqslant 0$ (moreover, $S_{n+1}=0$ when $\alpha_1=0$). If $0< S'_n<1$, then
$$0<S_{n+1}<
\begin{cases}
g_{01},\text{\,\,\,if\,\,\,} \alpha_1=0,\\
g_{01}+g_{11},\text{\,\,\,if\,\,\,}\alpha_1=1,\\
g_{01}+g_{11}+g_{21},\text{\,\,\,if\,\,\,}\alpha_1=2.
\end{cases}
$$

Thus, for any $x\in[0;1]$ and $m\in N$ the inequality $0\leqslant S_m<1$ holds.

Passing to the limit in the last double inequality, we obtain 
$$0\leqslant f(x)=\lim\limits_{m\to\infty}S_m\leqslant1=f(1).$$
\end{proof}

\section{Continuity of the function}

\begin{theorem}
The function $f$, defined by equality (1), is continuous at every point of the interval $[0;1]$.
\end{theorem}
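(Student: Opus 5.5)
We will prove continuity using cylinders of the $Q^*_3$-representation. Recall that the cylinder $\Delta^{Q^*_3}_{c_1\ldots c_m}$ is the set of $x$ whose representation begins with $c_1\ldots c_m$. It is a closed segment of length $\prod_{j=1}^m q_{c_jj}$.

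The first step is to evaluate $f$ on such a cylinder. For $x\in\Delta^{Q^*_3}_{c_1\ldots c_m}$, split series (1) after the $m$-th term:
$$
f(x)=S_m(c_1,\ldots,c_m)+\prod_{j=1}^m g_{c_jj}\cdot f_m(\sigma^m x),
$$
where $f_m$ is the function built by the same rule from the shifted sequences $(q_{i,k+m})$ and $(\varepsilon_{k+m})$. The lemma applies verbatim to $f_m$, so $0\le f_m\le 1$. Hence $f$ maps the cylinder into a segment of length at most $\prod_{j=1}^m g_{c_jj}\le (2/3)^m$, since $g_{ik}\le 2/3$ for all $i,k$. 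Therefore the oscillation of $f$ on every cylinder of rank $m$ is at most $(2/3)^m$.

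Now fix $x_0\in[0;1]$ and $\varepsilon>0$, and choose $m$ with $(2/3)^m<\varepsilon/2$. There are two cases.

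\emph{Case 1: $x_0$ is $Q^*_3$-irrational.} Then $x_0$ lies in the interior of its rank-$m$ cylinder, because the cylinder endpoints are exactly the $Q^*_3$-rational points. So some neighbourhood of $x_0$ lies in a single cylinder of rank $m$, and $|f(x)-f(x_0)|<\varepsilon$ on that neighbourhood.

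\emph{Case 2: $x_0$ is $Q^*_3$-rational, $x_0\neq 0,1$.} Then $x_0$ is the common endpoint of two adjacent cylinders of rank $m$. These are given by its two representations $\alpha_1\ldots\alpha_n(0)$ and $\alpha_1\ldots[\alpha_n-1](2)$, each truncated to length $m\ge n$. A neighbourhood of $x_0$ lies in the union of these two cylinders. On each cylinder $f$ differs from its value at $x_0$ by at most $(2/3)^m$. This works only because $f(x_0)$ is the same whichever representation is used, which was established in Section 2. So $|f(x)-f(x_0)|<\varepsilon$ on that neighbourhood. At the endpoints $0$ and $1$ only one-sided neighbourhoods are needed, and one cylinder suffices.

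The main point requiring care is Case 2. There one must use the well-definedness from Section 2 to identify the values of $f$ computed from the two different representations of $x_0$. One must also check that cylinders really are segments whose interiors exclude the rational endpoints. This follows from positivity of the $q_{ik}$ and the standard geometry of the $Q^*_3$-representation. Alternatively, uniform smallness of oscillation on cylinders, together with the fact that cylinder lengths $\prod_{j=1}^m q_{c_jj}$ tend to $0$ uniformly in $m$, gives uniform continuity directly. However, since the $q_{ik}$ may tend to $0$, the two-cylinder neighbourhood argument above is the safer one.
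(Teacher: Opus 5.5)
Your argument is correct and is essentially the paper's. Both factor out the common-prefix product $\prod_{j} g_{c_jj}\to 0$ against a bounded tail, and both handle $Q^*_3$-rational points through their two representations (your two adjacent cylinders); yours is more explicit about why a neighbourhood of $x_0$ lies in one or two cylinders.

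One small correction: $g_{1k}=\frac{1-2\varepsilon_k}{3}<0$ when $\varepsilon_k>\frac12$, so the image of a rank-$m$ cylinder has length $\left|\prod_{j=1}^m g_{c_jj}\right|$. The bound you need is therefore $|g_{ik}|\le\frac23$, which does hold.
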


\begin{proof}
To prove the continuity of the function at the point $x_0=\Delta^{Q^*_3}_{\alpha_1\alpha_2\ldots\alpha_n\ldots}$ we use the definition of continuity; namely, we prove that $\lim\limits_{x\to x_0}|f(x)-f(x_0)|=0$.
If $x_0$ is a $Q^*_3$--irrational point, then $x\to x_0$ is equivalent to $m\to\infty$, where $\alpha_m(x)\neq\alpha_m(x_0)$, but $\alpha_j(x)=\alpha_j(x_0)$ for $j < m$. Then
\begin{align*}
f(x)-f(x_0)=&\prod\limits^{m-1}_{j=1}g_{\alpha_j(x_0)j}\times\\
&\times\left(\delta_{\alpha_m(x)m}+\sum^{\infty}_{k=m+1}\left[\delta_{\alpha_k(x)k}\prod^{k-1}_{j=m+1}g_{\alpha_j(x)j}\right]\right.-\\
&-  \left.           \delta_{\alpha_m(x_0)m}+\sum^{\infty}_{k=m+1}\left[\delta_{\alpha_k(x_0)k}\prod^{k-1}_{j=m+1}g_{\alpha_j(x_0)j}\right]\right)=\\
&=\prod\limits^{m-1}_{j=1}g_{\alpha_j(x_0)j}\cdot B \to 0\,\,\, (m\to \infty).
\end{align*}
Since $\prod\limits^{m-1}_{j=1}g_{\alpha_j(x_0)j}\to 0$ as $m\to\infty$, and the series
$$\delta_{\alpha_m(u)m}+\sum\limits^{\infty}_{k=m+1}\left[\delta_{\alpha_k(u)k}\prod\limits^{k-1}_{j=m+1}g_{\alpha_j(u)j}\right]$$
converges for $u = x$ and $u = x_0$ .
To prove the continuity of $f$ at a $Q^*_3$--rational point $x_0$, it suffices to use the same approach. However, to prove left-side continuity we use the representation of the number $x_0$ with period (2), and to prove right-side continuity we use the representation with period (0).
\end{proof}

\begin{corollary}
The set of values of the function $f$, defined by equality (1), is the interval $[0;1]$.
\end{corollary}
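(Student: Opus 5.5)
This follows directly from the Lemma, Theorem 1 and the intermediate value theorem. By the Lemma, every value satisfies $0\leqslant f(x)\leqslant 1$, so $f([0;1])\subseteq[0;1]$. For the reverse inclusion we use the two endpoint values already computed in Section 3: $f(0)=f(\Delta^{Q^*_3}_{(0)})=0$, and $f(1)=f(\Delta^{Q^*_3}_{(2)})=1$, the latter obtained by squeezing $A$ between $1$ and $1$.

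By Theorem 1, $f$ is continuous on the whole segment $[0;1]$. The Bolzano--Cauchy intermediate value theorem then says that for every $y\in[0;1]$ there is some $x\in[0;1]$ with $f(x)=y$. Hence $[0;1]\subseteq f([0;1])$. Combining the two inclusions gives $f([0;1])=[0;1]$.

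The only step needing care is that continuity must hold on the closed segment, including at the endpoints and at $Q^*_3$-rational points. Theorem 1 covers this, since it treats one-sided continuity at rational points using the representations with period $(0)$ and period $(2)$. With that granted, there is no real obstacle.
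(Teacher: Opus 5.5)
Your argument is correct and is exactly the one the paper intends: the corollary is stated without proof right after Theorem 1, to follow from the Lemma ($0\leqslant f\leqslant 1$), continuity, $f(0)=0$, $f(1)=1$ and the intermediate value theorem. One caveat: the Section 3 ``squeeze'' you cite for $f(1)=1$ is not valid as written, because its lower bound needs $\frac{2-\varepsilon_k}{3}\geqslant\frac{2}{3}$ and its upper bound needs $\frac{2-\varepsilon_k}{3}\leqslant\frac{1}{3}$; however, $f(1)=1$ still holds by telescoping, since $\delta_{2k}=1-g_{2k}$ gives $A=\sum_{k\geqslant1}\bigl(\prod_{j<k}g_{2j}-\prod_{j\leqslant k}g_{2j}\bigr)=1$ because $0<g_{2j}\leqslant\frac{2}{3}$.
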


\section{Monotonicity Properties}

\begin{theorem}
The increment $\mu_f(\Delta^{Q^*_3}_{c_1\ldots c_m})$ of the function $f$, defined on the interval $[0;1]$ by equality (1), is calculated by the formula $$\mu_f(\Delta^{Q^*_3}_{c_1\ldots c_m})=\prod^{m}_{j=1}g_{c_j j}.$$
\end{theorem}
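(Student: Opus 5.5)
The increment is $\mu_f(\Delta^{Q^*_3}_{c_1\ldots c_m})=f(b)-f(a)$, where $a$ and $b$ are the left and right endpoints of the cylinder. I would compute $f(a)$ and $f(b)$ directly from definition (1) and subtract. The cylinder $\Delta^{Q^*_3}_{c_1\ldots c_m}$ is the segment with endpoints
$$a=\Delta^{Q^*_3}_{c_1\ldots c_m(0)},\qquad b=\Delta^{Q^*_3}_{c_1\ldots c_m(2)}.$$
By the correctness result of Section 2, each endpoint may be evaluated through this particular representation, even if it has another one.

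First, write the common part
$$S=\delta_{c_11}+\sum_{k=2}^{m}\delta_{c_kk}\prod_{j=1}^{k-1}g_{c_jj},$$
which contributes equally to $f(a)$ and $f(b)$. The tail of $f(a)$ vanishes, since $\delta_{0k}=0$ for all $k>m$. Hence $f(a)=S$.

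For $f(b)$, factor $\prod_{j=1}^{m}g_{c_jj}$ out of the tail. This gives
$$f(b)=S+\prod_{j=1}^{m}g_{c_jj}\Bigl(\delta_{2[m+1]}+\sum_{k=2}^{\infty}\delta_{2[m+k]}\prod_{i=1}^{k-1}g_{2[m+i]}\Bigr).$$
The bracket is $\Delta^{G^*_3}_{(2)}$, computed with the shifted sequence $(\varepsilon_{m+k})_{k\ge1}$.

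The only real step is to check that this bracket equals $1$. I would repeat the estimate from Section 3, where $f(1)=1$ was shown. There the inequalities $A\ge1$ and $A\le1$ used only $0\le\varepsilon_k\le1$, which is unchanged under the shift. A cleaner argument is to write $\delta_{2k}=1-g_{2k}$, since $g_{0k}+g_{1k}+g_{2k}=1$. The sum then telescopes:
$$\sum_{k\ge1}(1-g_{2k})\prod_{j<k}g_{2j}=1-\lim_{k\to\infty}\prod_{j\le k}g_{2j}=1,$$
because $g_{2j}\le 2/3$. The same identity was used in Section 2, so it can simply be cited. Subtracting then gives $f(b)-f(a)=\prod_{j=1}^m g_{c_jj}$.

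I do not expect a genuine obstacle. The only points needing care are justifying the shifted identity $\Delta^{G^*_3}_{(2)}=1$ via the telescoping argument, and making sure the endpoints are correctly identified as the cylinder's extreme points. For the latter, I would note that the left endpoint of the cylinder has all subsequent digits $0$ and the right endpoint has all subsequent digits $2$, which follows from the monotone ordering of the $Q^*_3$-representation.
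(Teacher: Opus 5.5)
Your proof is correct and follows the paper's own argument: evaluate $f$ at the endpoints $\Delta^{Q^*_3}_{c_1\ldots c_m(0)}$ and $\Delta^{Q^*_3}_{c_1\ldots c_m(2)}$, cancel the common first $m$ terms, and factor $\prod_{j=1}^m g_{c_jj}$ out of the remaining tail, which is the shifted $\Delta^{G^*_3}_{(2)}=1$. Your telescoping justification via $\delta_{2k}=1-g_{2k}$ is the right way to establish that identity, and it is better than citing the Section 3 estimate, whose termwise bounds run in the wrong direction (for example, $\frac{2-\varepsilon_k}{3}\geqslant\frac{2}{3}$ fails whenever $\varepsilon_k>0$), so rely on the telescoping argument alone.
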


\begin{proof} In fact,
\begin{align*}
\mu_f(\Delta^{Q^*_3}_{c_1c_2\ldots c_m})&=
f(\Delta^{Q^*_3}_{c_1c_2\ldots c_m(2)})-f(\Delta^{Q^*_3}_{c_1c_2\ldots c_m(0)})=\\
& \sum^{\infty}_{k=m+1}\delta_{2k}\prod^{k-1}_{j=1}g_{\alpha_j(x_2)j}-\sum^{\infty}_{k=m+1}\delta_{0k}\prod^{k-1}_{j=1}g_{\alpha_j(x_1)j}=\\
& \prod^{m}_{j=1}g_{c_jj}\left(\sum^{\infty}_{k=m+1}\delta_{2k}\prod^{k-1}_{j=1}g_{2j}-\sum^{\infty}_{k=m+1}\delta_{0k}\prod^{k-1}_{j=1}g_{0j}\right)=\\
&
\prod^{m}_{j=1}g_{c_jj}\left(\delta_{2(m+1)}+\sum^{\infty}_{k=m+2}\delta_{2k}\prod^{k-1}_{j=m+1}g_{2j}\right)=
\prod^{m}_{j=1}g_{c_jj}.
\end{align*}
\end{proof}

\begin{corollary}
If $\varepsilon_k=\dfrac{1}{2}$ for some natural $k\leqslant m$, then $g_{1k}=0$ and $g_{0k}=g_{2k}=\dfrac{1}{2}$. Hence, the function $f(x)$ is constant on some cylinder $\Delta^{Q^*_3}_{c_1c_2\ldots c_m}$.
\end{corollary}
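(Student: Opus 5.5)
The corollary should follow directly from the increment formula in the preceding theorem, applied to a cylinder whose $k$-th digit is $1$.

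First, a direct computation. If $\varepsilon_k=\frac12$, then
$$g_{1k}=\frac{1-2\cdot\frac12}{3}=0,\qquad g_{0k}=g_{2k}=\frac{1+\frac12}{3}=\frac12 .$$

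Next, choose the cylinder. Take $m\geqslant k$ and any digits $c_1,\ldots,c_m\in A_3$ with $c_k=1$. The theorem gives
$$\mu_f(\Delta^{Q^*_3}_{c_1\ldots c_m})=\prod_{j=1}^m g_{c_jj},$$
and this product contains the factor $g_{1k}=0$. Hence
$$f(\Delta^{Q^*_3}_{c_1\ldots c_m(2)})=f(\Delta^{Q^*_3}_{c_1\ldots c_m(0)}).$$

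The main obstacle is that equal values at the two endpoints do not by themselves make $f$ constant on the cylinder, since $f$ need not be monotone. I would fix a point $x=\Delta^{Q^*_3}_{c_1\ldots c_m\alpha_{m+1}\alpha_{m+2}\ldots}$ of the cylinder and use definition (1) directly. All terms of the series with index $k'>k$ contain the factor $g_{c_kk}=g_{1k}=0$, so they vanish. The remaining terms, those with $k'\leqslant k$, depend only on $c_1,\ldots,c_k$. Therefore $f(x)$ is the same number $\delta_{c_11}+\sum_{i=2}^{k}\delta_{c_ii}\prod_{j<i}g_{c_jj}$ for every $x$ in the cylinder. In particular $f$ is constant on $\Delta^{Q^*_3}_{c_1\ldots c_k}$, which contains every cylinder $\Delta^{Q^*_3}_{c_1\ldots c_m}$ with $c_k=1$, and this gives the claim.

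Finally, I would remark on the case $c_k\neq 1$. There $g_{c_kk}=\frac12\neq0$, so $f$ is not constant on such cylinders unless another digit $c_j=1$ falls at an index $j$ with $\varepsilon_j=\frac12$. This is why the statement asserts constancy on \emph{some} cylinder and not on every cylinder.
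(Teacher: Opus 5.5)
Your proof is correct, but at the decisive step it follows a different route from the paper. The paper gives no separate argument for this corollary. It is read off from the increment formula: $\mu_f(\Delta^{Q^*_3}_{c_1\ldots c_m})=\prod_{j=1}^m g_{c_jj}=0$ once $c_k=1$, ``hence'' $f$ is constant. The termwise evaluation of series (1) that you use appears in the paper only later, in Section 6, for the intervals $\nabla^{Q^*_3}_{c_1\ldots c_{m-1}1}$.

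You are right that a vanishing increment alone does not force a non-monotone function to be constant. The paper's route can be completed as follows:
\begin{itemize}
\item Every subcylinder $\Delta^{Q^*_3}_{c_1\ldots c_m d_{m+1}\ldots d_n}$ also has zero increment, since its product again contains $g_{1k}$.
\item So $f$ takes the same value at all endpoints of these subcylinders.
\item These endpoints are dense in $\Delta^{Q^*_3}_{c_1\ldots c_m}$, and continuity of $f$ finishes the argument.
\end{itemize}

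Your termwise argument needs neither continuity nor density. It also gives the explicit constant $\delta_{c_11}+\sum_{i=2}^k\delta_{c_ii}\prod_{j<i}g_{c_jj}$ and shows constancy already on the larger rank-$k$ cylinder $\Delta^{Q^*_3}_{c_1\ldots c_k}$. Endpoints with two representations are covered by the well-definedness shown in Section 2. Your preliminary appeal to the increment formula is not actually needed for the conclusion.
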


\begin{corollary}
If $0\leqslant \varepsilon_k<\dfrac{1}{2}$, then the function $f(x)$ is strictly increasing on entire domain of its definition.
\end{corollary}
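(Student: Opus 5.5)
The corollary should be read with the hypothesis $0\leqslant\varepsilon_k<\frac12$ holding for \emph{every} $k\in N$. Under that reading, the plan is to derive strict monotonicity from Theorem 2, which gives the increment formula $\mu_f(\Delta^{Q^*_3}_{c_1\ldots c_m})=\prod_{j=1}^m g_{c_jj}$, together with continuity (Theorem 1). The approach has three steps.

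\textbf{Step 1: all increments are positive.} Since $\varepsilon_k<\frac12$, we have $g_{1k}=\frac{1-2\varepsilon_k}{3}>0$. Also $g_{0k}=g_{2k}=\frac{1+\varepsilon_k}{3}>0$. Hence every finite product $\prod_{j=1}^m g_{c_jj}$ is strictly positive, so by Theorem 2
\[
f\bigl(\Delta^{Q^*_3}_{c_1\ldots c_m(2)}\bigr)>f\bigl(\Delta^{Q^*_3}_{c_1\ldots c_m(0)}\bigr)
\]
for every cylinder.

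\textbf{Step 2: compare the values at two points.} Take $0\leqslant x_1<x_2\leqslant 1$. Represent $x_1$ by its expansion with period $(2)$ when it is $Q^*_3$-rational, and $x_2$ by its expansion with period $(0)$. Let $m$ be the first index where the digits differ. Because the $Q^*_3$-representation is order preserving, $\alpha_m(x_1)<\alpha_m(x_2)$. Both points then lie in the common cylinder $\Delta_{c_1\ldots c_{m-1}}$, with $x_1$ in the subcylinder $\Delta_{c_1\ldots c_{m-1}a}$ and $x_2$ in $\Delta_{c_1\ldots c_{m-1}b}$, where $a<b$.

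Within the parent cylinder, the images of consecutive subcylinders are consecutive intervals. The right endpoint value $f(\Delta_{\ldots a(2)})$ equals the left endpoint value $f(\Delta_{\ldots [a+1](0)})$; this is the consistency computation already carried out in the correctness section. Therefore
\[
f(x_1)\leqslant f\bigl(\Delta_{c_1\ldots c_{m-1}a(2)}\bigr)\leqslant f\bigl(\Delta_{c_1\ldots c_{m-1}b(0)}\bigr)\leqslant f(x_2).
\]

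\textbf{Step 3: make the inequality strict.} Equality throughout the chain would force $x_1$ to be the right endpoint of its subcylinder and $x_2$ the left endpoint of its subcylinder, with $b=a+1$. But those two endpoints are the same point, contradicting $x_1<x_2$. So somewhere one of the points lies strictly inside a cylinder whose increment is positive, and strictness comes from the recursive structure.

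\textbf{Main obstacle.} The delicate point is justifying $f(x_1)<f(\Delta_{\ldots a(2)})$, or the analogous strict inequality for $x_2$, rigorously. I would argue directly from the series, mirroring the computation in Theorem 2: if $x_1\neq \Delta_{\ldots a(2)}$, then some later digit satisfies $\alpha_k(x_1)<2$. The difference $f(\Delta_{\ldots a(2)})-f(x_1)$ is then at least a positive multiple, namely a product of $g$'s, of $\delta_{2k}-\delta_{\alpha_k k}-g_{\alpha_k k}+\ldots$. This simplifies to at least the increment of a cylinder of rank $k$, which is positive by Step 1.

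Alternatively, one can use a cleaner route. Observe that $f$ is nondecreasing, since $f(\Delta_{c(0)})\leqslant f(x)\leqslant f(\Delta_{c(2)})$ inside each cylinder by Lemma 1 applied to the tail series. Then strictness follows because $x_1<x_2$ implies some whole cylinder lies inside $[x_1,x_2]$, and by Step 1 that cylinder's increment is positive.
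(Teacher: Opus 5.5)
Your proposal is correct and takes the same route as the paper, which states this corollary as an immediate consequence of Theorem~2: when $\varepsilon_k<\frac12$ for all $k$, every $g_{ik}$ is positive, so every cylinder increment $\prod_{j=1}^m g_{c_jj}$ is positive. Your Steps 2--3, or your alternative (nondecreasing plus a whole cylinder inside $[x_1,x_2]$, which relies on cylinder lengths tending to zero just as the paper assumes in Theorem~4), only spell out the step from positive cylinder increments to strict monotonicity that the paper leaves unstated.
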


\begin{theorem}
The function $f$, continuous on $[0;1]$ and defined by equality (1), is nowhere monotonic if the inequality $\dfrac{1}{2}<\varepsilon_k \leqslant 1$ holds for every $k\in N$.
\end{theorem}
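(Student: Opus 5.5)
The plan is to show that every subinterval of $[0;1]$ contains a cylinder of the $Q^*_3$-representation on which $f$ increases, and also a cylinder on which $f$ decreases. This rules out monotonicity on any interval.

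\textbf{Step 1: sign of the increments.} When $\frac{1}{2}<\varepsilon_k\leqslant 1$ for all $k$, we have
$$
g_{0k}=g_{2k}=\frac{1+\varepsilon_k}{3}>0,\qquad g_{1k}=\frac{1-2\varepsilon_k}{3}<0 .
$$
By the theorem on increments,
$$
\mu_f(\Delta^{Q^*_3}_{c_1\ldots c_m})=\prod_{j=1}^m g_{c_jj}.
$$
So the increment on a cylinder is nonzero, and its sign is $(-1)^{N_1}$, where $N_1$ is the number of digits $c_j=1$ among $c_1,\ldots,c_m$.

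\textbf{Step 2: locate two cylinders.} Take an arbitrary interval $(a;b)\subset[0;1]$. The cylinders $\Delta^{Q^*_3}_{c_1\ldots c_m}$ have lengths $\prod_{j\le m} q_{c_jj}$. Pick a $Q^*_3$-irrational point of $(a;b)$. The cylinders containing it form a nested sequence of closed intervals whose intersection is that single point, so their lengths tend to zero. Hence some cylinder $\Delta^{Q^*_3}_{c_1\ldots c_m}$ lies inside $(a;b)$. This shrinking of cylinders is where the positivity of the $q_{ik}$ and the standard geometry of the $Q^*_3$-representation enter; if the matrix is degenerate, the argument should instead be run on the cylinders themselves, since they are nondegenerate. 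I expect this to be the main point requiring care.

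Inside $\Delta^{Q^*_3}_{c_1\ldots c_m}$ consider the subcylinders $\Delta^{Q^*_3}_{c_1\ldots c_m0}$ and $\Delta^{Q^*_3}_{c_1\ldots c_m1}$. Their increments are
$$
\mu_f(\Delta^{Q^*_3}_{c_1\ldots c_m})\,g_{0(m+1)}
\quad\text{and}\quad
\mu_f(\Delta^{Q^*_3}_{c_1\ldots c_m})\,g_{1(m+1)},
$$
which have opposite signs.

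\textbf{Step 3: conclude.} Each of these subcylinders is a closed interval $[u;v]$, with $u$ having the expansion ending in $(0)$ and $v$ the one ending in $(2)$. The increment $\mu_f$ is exactly $f(v)-f(u)$. Hence inside $(a;b)$ there are points $u_1<v_1$ with $f(u_1)<f(v_1)$, and points $u_2<v_2$ with $f(u_2)>f(v_2)$. Therefore $f$ is neither nondecreasing nor nonincreasing on $(a;b)$. Since $(a;b)$ was arbitrary, $f$ is nowhere monotonic. Continuity (Theorem 1) is not needed for this argument beyond the correctness of the definition.
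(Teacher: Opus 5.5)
Your proof is correct and is essentially the paper's argument: locate a cylinder $\Delta^{Q^*_3}_{c_1\ldots c_m}\subset(a;b)$, then use $\mu_f(\Delta^{Q^*_3}_{c_1\ldots c_m i})=\mu_f(\Delta^{Q^*_3}_{c_1\ldots c_m})\,g_{i(m+1)}$ with $g_{0(m+1)}>0>g_{1(m+1)}$ to get two subcylinders whose increments have opposite signs. Your ``opposite signs'' formulation is in fact slightly more careful than the paper's, which claims that the $1$-subcylinder always has negative increment and that the product of the three increments is always negative; both claims hold only when $\mu_f(\Delta^{Q^*_3}_{c_1\ldots c_m})>0$.
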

\begin{proof}
Since $\varepsilon_k\neq\dfrac{1}{2}$ then we have $g_{0k}g_{1k}g_{2k}\neq 0$. Therefore the function $f$ does not contain any constant intervals. Suppose that the function $f$ has an interval of monotonicity $(a;b)\subset[0;1]$.
Since a cylinder $\Delta^{Q^*_3}_{c_1\ldots c_m}\subset (a;b)$ always exists, it also serves as an interval of monotonicity of the function $f$.
Since $g_{0k}g_{1k}g_{2k}\neq 0$, then $\mu_f(\Delta^{Q^*_3}_{c_1c_2\ldots c_m})=\prod\limits^{m}_{j=1}g_{c_j j}\neq 0$ and
$$\mu_f(\Delta^{Q^*_3}_{c_1c_2\ldots c_m0})\mu_f(\Delta^{Q^*_3}_{c_1c_2\ldots c_m1})\mu_f(\Delta^{Q^*_3}_{c_1c_2\ldots c_m2})<0,$$
that is, on the cylinder $\Delta^{Q^*_3}_{c_1c_2\ldots c_m1}$ the function $f$ has a negative increment, whereas on the cylinders $\Delta^{Q^*_3}_{c_1c_2\ldots c_m0}$ and $\Delta^{Q^*_3}_{c_1c_2\ldots c_m2}$ it has a positive increment. This contradicts the monotonicity of $f$ on the cylinder $\Delta^{Q^*_3}_{c_1c_2\ldots c_m}$.
\end{proof}

\section{Indications of nowhere monotonicity and Cantor-type structure}

If $\varepsilon_n=\dfrac{1}{2}$, then the function $f(x)$ is constant on the cylindrical interval $\nabla^{Q_3^*}_{c_1...c_{m-1}1}$, where $c_i \in V$ for $i = 0,1,...,m-1$ and $m\in N$, which is adjacent to the set $C[Q_3^*,V]$.

Since, for any $x$ from this interval, we have $\alpha_m(x)=2$, then
$$\prod^{m}_{j=1}g_{\alpha_jj}(x)=0,$$
and
\begin{align*}f(x)=&\delta_{\alpha_1(x)1}+\delta_{\alpha_2(x)2}g_{\alpha_1(x)1}+...+
        \delta_{\alpha_{m-1}(x)[m-1]}\prod^{m-2}_{j=1}g_{\alpha_j(x)j}+\\
        &0\cdot\prod^{m-1}_{j=1}g_{\alpha_j(x)j}+0=const,
\end{align*}
that is, $f$ is constant on this interval.

The sum of the lengths of all intervals $\nabla^{Q_3^*}_{c_1...c_{m-1}1}$, adjacent to the set $C[Q_3^*,V]$, where $c_i \in V$ for $i = 0,1,...,m-1$ and $m\in N$, equals 1. Indeed,
$$
S=|\Delta^{Q_3^*}_{1}|+\!\sum_{c_1 \in V}|\Delta^{Q_3^*}_{c_11}|+\!\sum_{c_1 \in V}\!\sum_{c_2 \in V}|\Delta^{Q_3^*}_{c_1c_21}|+...=\frac{1}{3}+\frac{2}{3^2}+\frac{2^2}{3^3}+...=1.
$$
Since the function $f$ is constant on each of these intervals $\nabla^{Q_3^*}_{c_1...c_{m-1}1}$, $c_i \in V$ for $i= 0,1,...,m-1$ and $m\in N$,
then its derivative equals 0 on each of them. Consequently, the function is singular.

\section{Function extrema}

We denote the increment $\mu_f$ of the function $f$ on the cylinder $\Delta^{Q_3^*}_{c_1...c_m}$ by $\mu_m$, that is
$$\mu_m=\mu_f(\Delta^{Q_3^*}_{c_1...c_m})=\prod^m_{j=1}g_{c_j j}(x).$$

\begin{lemma}
The function $f$ on the cylinder $\Delta^{Q_3^*}_{c_1...c_m}$ attains its maximum and minimum values at its endpoints. Moreover, 
$$\text{ if }\mu_m>0,\text{   then   }\max f(x)=f(\Delta^{Q_3^*}_{c_1...c_m(2)}),
                         \min f(x)=f(\Delta^{Q_3^*}_{c_1...c_m(0)}),$$
$$\text{ if }\mu_m<0,\text{   then   }\max f(x)=f(\Delta^{Q_3^*}_{c_1...c_m(0)}),
                         \min f(x)=f(\Delta^{Q_3^*}_{c_1...c_m(2)}).$$
\end{lemma}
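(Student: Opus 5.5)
The plan is to use the self-similar structure of $f$ on cylinders and reduce the statement to Lemma~1 applied to a ``shifted'' function. Fix the cylinder $\Delta^{Q_3^*}_{c_1\ldots c_m}$ and let $x=\Delta^{Q_3^*}_{c_1\ldots c_m\alpha_{m+1}\alpha_{m+2}\ldots}$ be an arbitrary point of it. Splitting series (1) after the $m$-th term and factoring out $\prod_{j=1}^m g_{c_jj}$ from the tail, I will write
$$
f(x)=f\bigl(\Delta^{Q_3^*}_{c_1\ldots c_m(0)}\bigr)+\mu_m\, T_m(x),
$$
$$
T_m(x)=\delta_{\alpha_{m+1}(x)[m+1]}+\sum_{k=m+2}^{\infty}\Bigl[\delta_{\alpha_k(x)k}\prod_{j=m+1}^{k-1}g_{\alpha_j(x)j}\Bigr].
$$
Here I use that the first $m$ terms of (1) coincide with those of $f(\Delta^{Q_3^*}_{c_1\ldots c_m(0)})$, and that the tail of the latter vanishes because $\delta_{0k}=0$.

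Next I observe that $T_m(x)$ is exactly the value of a function of the same type as $f$, namely the function $f^{(m)}$ defined by (1) with the shifted matrix $\|q_{i[k+m]}\|$ and the shifted sequence $(\varepsilon_{k+m})$, evaluated at the point $\Delta^{Q_3^*}_{\alpha_{m+1}\alpha_{m+2}\ldots}$ of the shifted representation. The shifted sequence still satisfies $0\leqslant\varepsilon_{k+m}\leqslant 1$. The proof of Lemma~1 and the computation of $f(0)$ and $f(1)$ in Section~3 use only this condition. Hence they apply verbatim to $f^{(m)}$, giving $0\leqslant T_m(x)\leqslant 1$ together with the endpoint values: $T_m=0$ for the tail $(0)$ and $T_m=1$ for the tail $(2)$.

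The latter value agrees with the computation in the proof of the theorem on the increment $\mu_f$. In particular,
$$
f\bigl(\Delta^{Q_3^*}_{c_1\ldots c_m(2)}\bigr)=f\bigl(\Delta^{Q_3^*}_{c_1\ldots c_m(0)}\bigr)+\mu_m .
$$
Therefore $f(x)$ is a point of the segment with endpoints $f(\Delta^{Q_3^*}_{c_1\ldots c_m(0)})$ and $f(\Delta^{Q_3^*}_{c_1\ldots c_m(2)})$, and both endpoints are attained. If $\mu_m>0$, the left endpoint gives the minimum and the right endpoint the maximum. If $\mu_m<0$, multiplication by $\mu_m$ reverses the inequalities $0\leqslant T_m\leqslant 1$, so the roles are swapped. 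If $\mu_m=0$, $f$ is constant on the cylinder and the statement is trivial.

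The main obstacle is the bound $0\leqslant T_m\leqslant 1$. When $\varepsilon_k>\frac12$ the weights $g_{1k}$ are negative, so the partial products in the tail can have either sign, and nonnegativity of the series is not termwise obvious. I will therefore not estimate terms individually but rely on Lemma~1 for the shifted parameters. I will check explicitly that its inductive argument involves no particular value of the index $k$. The decomposition $S_{n+1}=\delta_{\alpha_11}+g_{\alpha_11}S'_n$ is precisely this same shift by one step, so the induction transfers to the shifted sequence. The rest is routine bookkeeping with the indices in the products.
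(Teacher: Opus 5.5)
Your proposal is correct and is essentially the paper's proof: the paper also writes $f(x)=f(\Delta^{Q_3^*}_{c_1\ldots c_m(0)})+\mu_m\cdot(\text{tail value})$ and uses that the tail lies in $[0;1]$, with values $0$ and $1$ for the tails $(0)$ and $(2)$. The paper writes the tail loosely as $f(\Delta^{Q_3^*}_{\alpha_{m+1}\alpha_{m+2}\ldots})$, whereas you correctly make the parameter shift explicit and also cover the case $\mu_m=0$.

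When you re-verify the cited facts for the shifted parameters, note two points. First, $\delta_{2k}=1-g_{2k}$ makes the tail-$(2)$ series telescope to $1-\lim_{K\to\infty}\prod_{j=m+1}^{K}g_{2j}=1$; the termwise two-sided estimate in Section~3 does not actually justify this. Second, in the induction of Lemma~1 the digit-$1$ step gives $g_{0k}+g_{1k}S'$ between $\frac{2-\varepsilon_k}{3}$ and $\frac{1+\varepsilon_k}{3}$, hence in $[\frac13;\frac23]$, and not below $g_{0k}+g_{1k}$ as the paper displays when $g_{1k}<0$.
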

\begin{proof}
Since the cylinder $\Delta^{Q_3^*}_{c_1...c_m}$ is a segment $[\Delta^{Q_3^*}_{c_1...c_m(0)}$;$\Delta^{Q_3^*}_{c_1...c_m(2)}]$, then we can express the value of the function $f$ at an arbitrary point $x\in\Delta^{Q_3^*}_{c_1...c_m}$ as
$$f(x)=f(\Delta^{Q_3^*}_{c_1...c_m(0)})+\mu_m\cdot f(\Delta^{Q_3^*}_{\alpha_{m+1}\alpha_{m+2}...\alpha_{m+n}...}).$$
If $\alpha_{m+j}=0$ for any $j\in N$, then $f(\Delta^{Q_3^*}_{\alpha_{m+1}\alpha_{m+2}...\alpha_{m+n}...})=0$. If $\alpha_{m+j}=2$ for any $j\in N$, then $f(\Delta^{Q_3^*}_{\alpha_{m+1}\alpha_{m+2}...\alpha_{m+n}...})=1$.\\
Then if $\mu_m>0$, we have $$\max_{x\in\Delta^{Q_3^*}_{c_1...c_m}} f(x)=f(\Delta^{Q_3^*}_{c_1...c_m(2)}),
                         \min_{x\in\Delta^{Q_3^*}_{c_1...c_m}} f(x)=f(\Delta^{Q_3^*}_{c_1...c_m(0)}),$$
and if $\mu_m<0$, we have $$\max_{x\in\Delta^{Q_3^*}_{c_1...c_m}} f(x)=f(\Delta^{Q_3^*}_{c_1...c_m(0)}),
                         \min_{x\in\Delta^{Q_3^*}_{c_1...c_m}} f(x)=f(\Delta^{Q_3^*}_{c_1...c_m(2)}).$$
\end{proof}

\section{Properties of the graph $\Gamma_f$ of the function}
\begin{theorem}
If $\varepsilon_n=const$ and $\prod\limits_{i=0}^2g_i\neq 0$, then the graph of the function $f$ and its subset $\Gamma^i_f\equiv\{(x;y): x\in\Delta^{G^*_3}_i, y=f(x)\}$ are affinely equivalent with $\Gamma^i_f=\phi_i(\Gamma_f)$, where
\begin{equation*}
\phi_i :
\begin{cases}
x'=q_ix+\beta_i,\\
y'=g_iy+\delta_i, \,\,\,\,\,\,i\in\{0,1,2\}
\end{cases}
\end{equation*}
\end{theorem}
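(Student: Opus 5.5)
When $\varepsilon_n=\varepsilon$ is constant, the matrix becomes $Q_3$ with $q_{ik}=q_i$, and $g_{ik}=g_i$, $\delta_{ik}=\delta_i$ no longer depend on $k$. The plan is to read everything off the self-similarity of the series (1) under a shift of the digit sequence. (In the statement, $\Gamma^i_f$ should be understood with $x$ ranging over the cylinder $\Delta^{Q_3}_i$ of the argument's representation; the superscript $G^*_3$ there is presumably a misprint for $Q_3$.)

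Take an arbitrary point $(x;y)\in\Gamma_f$ with $x=\Delta^{Q_3}_{\alpha_1\alpha_2\ldots}$. From the definition of the $Q_3$-representation,
$$q_ix+\beta_i=\beta_i+q_i\Big(\beta_{\alpha_1}+\sum_{k\ge2}\beta_{\alpha_k}\prod_{j=1}^{k-1}q_{\alpha_j}\Big)=\Delta^{Q_3}_{i\alpha_1\alpha_2\ldots}.$$
Thus $x'=q_ix+\beta_i$ is exactly the point of $\Delta^{Q_3}_i$ obtained by prepending the digit $i$. Because $q_i>0$, the map $x\mapsto q_ix+\beta_i$ is a bijection of $[0;1]$ onto $\Delta^{Q_3}_i=[\beta_i;\beta_i+q_i]$.

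Applying (1) to $x'$ and factoring $g_i$ out of every term after the first gives
$$f(x')=\delta_i+g_i\Big(\delta_{\alpha_1}+\sum_{k\ge2}\delta_{\alpha_k}\prod_{j=1}^{k-1}g_{\alpha_j}\Big)=g_if(x)+\delta_i=y'.$$
This is precisely the computation already used in the proof of Lemma 1, where $S_{n+1}=\delta_{\alpha_1 1}+g_{\alpha_1 1}S'_n$. Here it is valid termwise in the limit because the coefficients are stationary, and the series converges absolutely by Section 2. Hence $\phi_i(\Gamma_f)\subset\Gamma^i_f$.

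For the reverse inclusion, any $x'\in\Delta^{Q_3}_i$ has a representation starting with the digit $i$, so $x'=q_ix+\beta_i$ for $x=\Delta^{Q_3}_{\alpha_2\alpha_3\ldots}$. The identity above then shows $(x';f(x'))=\phi_i(x;f(x))$, which gives $\Gamma^i_f=\phi_i(\Gamma_f)$.

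Finally, $\phi_i$ is a diagonal linear map plus a translation, with determinant $q_ig_i$. This is nonzero because $q_i>0$ and $\prod g_i\neq0$. So $\phi_i$ is a nondegenerate affine transformation, and the two sets are affinely equivalent.

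The only delicate step is the $Q_3$-rational points, which have two representations. One has to check that prepending $i$ to both representations gives the two representations of the same point $x'$ in $\Delta^{Q_3}_i$. The other case to check is the endpoint shared with a neighbouring cylinder, such as $\Delta^{Q_3}_{i(0)}=\Delta^{Q_3}_{[i-1](2)}$. Since Section 2 shows that $f$ takes equal values on both representations, the formula $f(x')=g_if(x)+\delta_i$ holds whichever representation is chosen, and no ambiguity arises.
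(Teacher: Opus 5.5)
Your proof is correct and is essentially the paper's argument: both inclusions $\Gamma^i_f\subset\phi_i(\Gamma_f)$ and $\phi_i(\Gamma_f)\subset\Gamma^i_f$ come from prepending the digit $i$, which gives $x'=q_ix+\beta_i$ and $f(x')=\delta_i+g_if(x)$; your extra remarks (determinant $q_ig_i\neq0$, consistency at $Q_3$-rational points) only make explicit what the paper leaves implicit. One wording fix: constancy of $\varepsilon_n$ does not by itself make the matrix stationary, so $q_{ik}=q_i$ should be stated as a separate hypothesis, which the paper also assumes tacitly by writing $q_i$, $\beta_i$ and $\Delta^{Q_3}$.
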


\begin{proof}
To prove the theorem we show that $\Gamma_{\phi}^i\subset \phi_i(\Gamma_f)$ and $\phi_i(\Gamma_f)\subset \Gamma_{\phi}^i$.

Let $M(x;y)\subset\Gamma_f^i$, that is $$x=\Delta^{Q_3}_{i\alpha_1\alpha_2\ldots\alpha_n\ldots},\text{ and } y=\delta_i+g_i \left(\delta_{\alpha_1}+ \sum\limits^{\infty}_{k=2}\left(\delta_{\alpha_k}\prod\limits^{k-1}_{j=1}g_{\alpha_j}\right)\right).$$
Let us select the point $M_0(x_0;y_0)$ on the graph $\Gamma_f$, where $$x_0=\Delta^{Q_3}_{\alpha_1\alpha_2\ldots\alpha_n\ldots},\text{ and } y_0=\delta_{\alpha_1}+ \sum\limits^{\infty}_{k=2}\left(\delta_{\alpha_k}\prod\limits^{k-1}_{j=1}g_{\alpha_j}\right).$$
Its image $\phi_i(M_0)=M$, hence $M\in \Gamma_f$.

Let $M'(x';y')$ be the image of the point $M(x,y)$ under the affine transformation $\phi_i$, that is
$$x=\Delta^{Q_3}_{\alpha_1\alpha_2\ldots\alpha_n\ldots},\text{ and } y=\delta_{\alpha_1}+ \sum\limits^{\infty}_{k=2}\left(\delta_{\alpha_k}\prod\limits^{k-1}_{j=1}g_{\alpha_j}\right),$$ and
$$x'=\Delta^{Q_3}_{i\alpha_1\alpha_2\ldots\alpha_n\ldots},\text{ and } y'=\delta_i+g_i \left(\delta_{\alpha_1}+ \sum\limits^{\infty}_{k=2}\left(\delta_{\alpha_k}\prod\limits^{k-1}_{j=1}g_{\alpha_j}\right)\right).$$
It follows that $M'\in\Gamma_f^i$, and therefore $\phi_i(\Gamma_f)\subset\Gamma^i_f$. Thus, $\Gamma^i_f=\phi_i(\Gamma_f)$.
\end{proof}

\section{Level sets of the function}

Recall that the level set $y_0$ of a function $f$ is the set
$$f^{-1}(y_0)=\{x:f(x)=y_0\}.$$

If $0\leqslant \varepsilon_n<\dfrac{1}{2}$ for all $n\in N$, then the function $f$ is strictly increasing, so each of its level sets consists of a single point.

If $\varepsilon_n=\dfrac{1}{2}$, then $g_{1n}=0$ for all $n\in N$ and each level set of the function $f$, due to its continuity, is either a point or a segment.

If $\dfrac{1}{2}<\varepsilon_n\leqslant 1$, then  $g_{1n}<0$ and each level set of the function $f$ is a countable set. Since the increments of the function $f$ on the cylinders $\Delta^{Q_3^*}_{\alpha_1(x)\alpha_2(x)...\alpha_i(x)}$ of rank $i$ and $\Delta^{Q_3^*}_{\alpha_1(x)\alpha_2(x)...\alpha_i(x)\alpha_{i+1}(x)}$
of rank $i+1$ have opposite signs, and taking into account the continuity of $f$, the line $y = y_0$ intersects the graph of $f$ at least at two points that belong to the cylinder $\Delta^{Q_3^*}_{\alpha_1(x)\alpha_2(x)...\alpha_i(x)}$
 and do not belong to the cylinder $\Delta^{Q_3^*}_{\alpha_1(x)\alpha_2(x)...\alpha_i(x)\alpha_{i+1}(x)}$.
Therefore, the level set $f^{-1}(y_0)$ is a countable set. It cannot be continuous, since the set of local maxima and minima is countable.

\end{document}